\documentclass[12pt]{article} 
\usepackage[utf8]{inputenc}
\usepackage[T2A]{fontenc}\usepackage[english]{babel}
\usepackage{amsthm, amsmath, amssymb, amsbsy, amscd, amsfonts, latexsym, euscript,epsf,stackrel}
\sloppy
\usepackage{authblk}
\usepackage{graphicx} 
\usepackage{epsfig}

\usepackage{tikz}
\usetikzlibrary{positioning,arrows}


\newtheorem{theorem}{Theorem}[section]

\newtheorem{proposition}[theorem]{Proposition}
\theoremstyle{definition}

\newtheorem{definition}[theorem]{Definition}

\newtheorem{remark}[theorem]{Remark}
\numberwithin{equation}{subsection}
\newtheorem*{ack}{Acknowledgement}
\usepackage[all,cmtip]{xy}
\usepackage{xcolor}
\usepackage{xfrac}
\usepackage{pb-diagram}
\usepackage{graphicx}
\usepackage{float}

\newcommand{\T}{\operatorname{T}}

\newcommand{\id}{\mathrm{id}}


\setlength\oddsidemargin{.8mm}
\setlength\evensidemargin{.8mm}
\setlength\textheight{20.6cm}
\setlength\textwidth{16cm}

\makeatletter
\newcommand{\subjclass}[2][1991]{%
	\let\@oldtitle\@title%
	\gdef\@title{\@oldtitle\footnotetext{#1 \emph{Mathematics subject classification.} #2}}%
}
\newcommand{\keywords}[1]{%
	\let\@@oldtitle\@title%
	\gdef\@title{\@@oldtitle\footnotetext{\emph{Key words.} #1.}}%
}
\makeatother

\def\bea{\begin{eqnarray}}
	\def\eea{\end{eqnarray}}
\def\nn{\nonumber}

\begin{document}
	\title{2-dimensional self-distributive non-counital bialgebras and knot invariants}

	\author[1,2,3]{Valeriy G. Bardakov\footnote{bardakov@math.nsc.ru}}
	\author[3]{Tatiana A. Kozlovskaya\footnote{konus\_magadan@mail.ru}}
	\author[1]{Alexander~S.~Panasenko\footnote{a.panasenko@g.nsu.ru}}
	\author[4,5]{Dmitry~V.~Talalaev\footnote{dtalalaev@yandex.ru}}
	\affil[1]{\small Sobolev Institute of Mathematics, Novosibirsk 630090, Russia.
	}
	\affil[2]{\small Novosibirsk State Agrarian University, Dobrolyubova street, 160, Novosibirsk, 630039, Russia.
	}
	\affil[3]{\small Regional Scientific and Educational Mathematical Center of Tomsk State University,
		36 Lenin Ave., 14, 634050, Tomsk, Russia.  
}
\affil[4]{\small Lomonosov Moscow State University, 
119991, Moscow, Russia.
}
\affil[5]{\small Center of Integrable Systems, Demidov Yaroslavl State University, Yaroslavl, Russia, 150003, Sovetskaya Str. 14
}

	\date{\today}

	\keywords{Algebra, coalgebra, coassociative coalgebra, bialgebra, self-distributive bialgebra. }

	\maketitle
	
	\begin{abstract}	 
In 	the preprint of  V. Bardakov, T. Kozlovskaya, D. Talalaev (Self-distributive bialgebras, arXiv:2501.19152) it was formulated a problem of classification of self-dist\-ribu\-tive bialgebras and was given classification of two-dimensional  counital self-distributive bialgebras.
		In this paper, we consider non-counital case. We find all 2-dimensional algebras of this type.
	 In constructed algebras we study the question of finding quandles for constructing knot invariants.   This activity is part of the overall program for the linearization of the concepts of rack and quandle and the development of the representations theory of  these structures.
	\end{abstract}

	\tableofcontents

	\section{Introduction}
	
	By groupoid we mean a non-empty set with one binary algebraic operation.
 The classical examples are semigroup and group, which are associative groupoids. An alternative example was invented by D. Joyce  \cite{J} and S. Matveev \cite{M} in the context of the theory of knot invariants, namely the structures of racks and quandles. These groupoids are self-distributive. Recall that a groupoid  $(X, *)$ is said to be a right self-distributive groupoid if for any $u, v, w \in X$ the following equality holds
$$
(u*v)*w = (u*w)*(v*w).
$$ 
By symmetry, a groupoid  $(X, *)$ is said to be a left self-distributive groupoid if for any $u, v, w \in X$ one has
$$
w*(u*v) = (w*u)*(w*v).
$$ 
In the present paper we are studying right self-distributive groupoid, we call them simply self-distributive groupoid.

The notion of the group algebra is very important in the theory of  groups, in particular in its representation theory. If $(G, \cdot)$ is a group, $\Bbbk$ is a commutative associative ring with unit, then the group algebra $\Bbbk[G]$ is the set of finite formal linear combinations of elements of $G$ with a natural distributive multiplication. The associativity follows from the fact that the  associativity identity in $G$ 
$$
(ab)c = a(bc)
$$
is extended to $\Bbbk[G]$ as a tri-linear operation. It is quite natural to generalize this idea to racks and quandles. A theory of quandle algebras analogous to the classical theory of group algebras has been proposed in \cite{BPS}, where several relations between quandles and their associated quandle	algebras have been explored. 
 
It is easy to see that the axiom of self-distributivity  is non-linear in $w$ and quandle algebras of non-trivial quandles are non-self-distributive.  
By analogy with self-distributive groupoid, an algebra $A$ over a field $\Bbbk$ is said to be self-distributive algebra if
	$$
	(a b) c = (a c) (b c),~~~a, b, c \in A.
	$$
This structure appears to be quite poor. Let $A$ be a self-distributive algebra. If $\alpha\in\Bbbk$ then $(ab)(\alpha c) = (a\cdot \alpha c)(b\cdot \alpha c) = \alpha^2 (ab)c$. It means that $(\alpha^2-\alpha)((ab)c) = 0$. In particular this implies that if $\Bbbk\neq \mathbb{Z}_2$ then a self-distributive algebra over a field $\Bbbk$ is right-nilpotent of index $3$, i.e. $(ab)c=0$ for any $a,b,c\in A$. A complete description of self-distributive algebras can be found in \cite{BKT1}.

Zero-divisors in quandle rings have been investigated in \cite{BPS} using the idea of orderability
of quandles. It has been proved that quandle rings of left or right orderable quandles
which are semi-latin have no zero-divisors. As a special case, it follows that quandle
rings of free quandles have no zero-divisors. It is well known that units play a fundamental role in the structure theory of group rings. In contrast, idempotents are the
most natural objects in quandle rings since each quandle element is, by definition, an
idempotent of the quandle ring.

It is well known that the group algebra caries a structure of Hopf algebra. The principal idea of \cite{BKT1} is to study a bialgebra structure of rack algebras and their generalization with non group-like comultiplication. We hope in future to develop this idea in the flavour of quantum groups. 

Typically a bialgebra is an associative algebra with consistent comultiplication, but in last few decades it appeared in literature non-associative bialgebras. The principal example for us is bialgebra defined on a rack algebra. 
A notion of a self-distributive bialgebra was introduced and elaborated in \cite{ABRW} in the case of rack algebras. It was demonstrated that just extending the group-like comultiplication on rack elements to their linear combinations by linearity one gets a self-distributive bialgebra.

The purpose of this computational paper is two fold. The first one  is to give a classification of 2-dimension non-counital self-distributive bialgebra. 
The second one is description of idempotents and quandles in these 2-dimension algebras. 

As we know quandles give strong invariants of links in classical knot theory. In \cite{ES} quandle algebras are used for construction of knot invariants and 
was shown that quandle rings and their idempotents lead to proper enhancements of the well-known quandle coloring invariant of links in the 3-space.
Also, in this paper the authors considered quandles of small orders, constructed quandle algebras and find quandles in this algebras. In more cases these quandles are trivial.
In \cite{ES} was  presented computer assisted calculations
of all idempotents for integral as well as mod 2 quandle rings of all quandles of order
less than six, and also determine quandles for which the set of all idempotents is itself
a quandle.

The present article is organize by the following manner.  In Section \ref{prel} we recall some definitions of coalgebras, bialgebras,  quandles and quandle algebras, which can be found in \cite{Kas, J, M, BPS, BPS-1}. 
Section \ref{class} is devoted to the classification of 2-dimensional non-counital self-distributive bialgebras. 
In section \ref{id} we find idempotents and quandles in algebar which was constructed in Section \ref{prel}.
Also, we show that for some algebras the augmentation map $\varepsilon \colon A \to \Bbbk$ is not a homomorphism and there is an idempotent $u \in A$ for which $\varepsilon (u)$ is arbitrary element in $\Bbbk$. Recall that if $\Bbbk$ is an integral domain with unity, then the augmentation map $\varepsilon \colon \Bbbk[Q] \to \Bbbk$ is a ring homomorphism and each idempotent of $\Bbbk[Q]$ has augmentation value 0 or 1.


	\bigskip
	
	
	\section{Preliminaries} \label{prel}

	\subsection{Coalgebra}
	Let $C$ be a module over a commutative associative unital ring $\Bbbk$.   Comultiplication on
	$C$ is a linear map 
	$$
	\Delta \colon C \to C \otimes_\Bbbk C = C \otimes C.
	$$
For a finite dimensional $C$  a comultiplication on $C$ induces a multiplication on $C^*$. Hence $C^*$ becomes an algebra (not-associative in general).

	Using the  Sweedler notations we shall write  the comultiplication on $c \in V$ in the form
	\bea
	\Delta(c)=\sum_i c_i^{(1)} \otimes c_i^{(2)} =c^{(1)}\otimes c^{(2)}.\nn
	\eea
Comultiplication $\Delta$ is called a coassociative if the following holds in $C \otimes C \otimes C$:
$$
(\Delta \otimes \id_C ) \circ  \Delta  = (\id_C \otimes \Delta ) \circ  \Delta. 
$$ 
The pair $(C, \Delta)$ is called a (coassociative) coalgebra over $\Bbbk$. The coalgebra $(C, \Delta)$ is called cocommutative  if $\tau \circ  \Delta =  \Delta$, where
$\tau \colon C \otimes C \to  C \otimes C$, $\tau(a \otimes b) = b  \otimes a$ is the canonical flip map. A linear map $\varepsilon \colon C \to \Bbbk$  is called a counit for the coalgebra $(C, \Delta)$ if 
	$$
	(\varepsilon \otimes \id_C) \circ  \Delta  = (\id_C \otimes \varepsilon) \circ  \Delta = \id_C,~~~\Bbbk \otimes C \cong C. 
	$$ 
	The triple $(C, \Delta, \varepsilon)$ is called a counital coalgebra.

In this article we shall use the following definition, which was introduced in \cite{BKT1}.

	\begin{definition} 
A triple $(A, \Delta, \mu)$	is said to be a {\it bialgebra} if $(A, \mu)$ is an  $\Bbbk$-algebra (may be non-associative and without unit), 
$(A, \Delta)$ is a coassociative coalgebra and  a coassociative comultiplication
		$$
		\Delta \colon A \to A \otimes A
		$$
is a homomorphisms of $\Bbbk$-algebras, where the product on the right hand side is the component-vice product on $A\otimes A.$	
A bialgebra $(A, \Delta, \mu)$	is said to be a {\it self-distributive} if the following identities hold for all $a, b, c \in A$ 
				$$
		(a * b) * c = \sum_{i} (a * c^{(1)}_i) * (b * c^{(2)}_i)= (a * c^{(1)}) * (b * c^{(2)}),
		$$
where we write for all $a, b \in  A$ $b * a = \mu(a \otimes b))$	.
	\end{definition}
We call the condition that $\Delta$ is a homomorphisms of $\Bbbk$-algebras a 
consistency condition.

Self-distributive bialgebras were introduced in \cite{E} (see  also \cite{ABRW}).

	\subsection{Rack and quandles}
	
	It is well--known that quandle axioms are simply algebraic formulations of the three Reidemeister moves of planar diagrams of knots and links in
the 3-space \cite{J, M}, which therefore makes them suitable for defining invariants of
knots and links.

	A {\it quandle} is a non-empty set $Q$ with a binary operation $(x,y) \mapsto x * y$ satisfying the following axioms:
	\begin{enumerate}
		\item[(Q1)] For all $x \in Q$ holds $x*x=x$, i. e.  any element is idempotent,
		\item[(Q2)] For any $x,y \in Q$ there exists a unique $z \in Q$ such that $x=z*y$,
		\item[(Q3)] $(x*y)*z=(x*z) * (y*z)$ for all $x,y,z \in Q$.
	\end{enumerate}

	An algebraic system satisfying only (Q2) and (Q3)  is called a {\it rack}. 
	
	A quandle  $Q$ is called {\it trivial} if $x*y=x$ for all $x, y \in Q$.  Unlike groups, a trivial quandle can have arbitrary number of elements. We denote the $n$-element trivial quandle by $\T_n$ and an arbitrary trivial quandle by $\T$.
	\medskip

	\subsection{Rack algebras and quandle algebras}
	
	Rack algebra and quandle algebra notions were introduced in \cite{BPS}. 	
	Let $Q$ be a quandle and  $\Bbbk[Q]$ be the set of all formal finite $\Bbbk$-linear combinations of elements of $Q$, that is,
	$$\Bbbk[Q]:=\Big\{ \sum_i\alpha_i x_i~|~\alpha_i \in \Bbbk,~ x_i \in Q \Big\}.$$
	Then $\Bbbk[Q]$ is an additive abelian group with coefficient-wise addition. Define multiplication in $\Bbbk[Q]$ by setting $$\big(\sum_i\alpha_i x_i\big) \big(\sum_j\beta_j x_j\big):=\sum_{i,j}\alpha_i\beta_j (x_i x_j).$$
	Clearly, the multiplication is distributive with respect to addition from both left and right, and $\Bbbk[Q]$ forms  an $\Bbbk$-algebra, which we call the quandle algebra of $Q$ with coefficients in the ring $\Bbbk$. Since $Q$ is non-associative, unless it is a trivial quandle, it follows that $\Bbbk[Q]$ is a non-associative  in general. If $Q$ is a rack, then its  rack algebra $\Bbbk[Q]$ is defined analogously.  
	\par

	A non-zero element $u \in \Bbbk[Q]$ is called an idempotent if $u^2 = u$.  Clearly, elements of the basis $x \in Q $ are idempotents of
$\Bbbk[Q]$ and referred as trivial idempotents.
	
	Define the augmentation map
	$$\varepsilon \colon \Bbbk[Q] \to \Bbbk$$
	by setting $$\varepsilon \big(\sum_i\alpha_i x_i\big)= \sum_i\alpha_i .$$
	Clearly, $\varepsilon$ is a surjective  ring homomorphism, and $I_\Bbbk(Q):= \ker(\varepsilon)$ is a two-sided ideal of $\Bbbk[Q]$, called the {\it augmentation ideal} of $\Bbbk[Q]$. It is easy to see that $\{x-y~|~x, y \in Q \}$ is  a generating set for $I_\Bbbk(Q)$ as an $\Bbbk$-module. Further, if $x_0 \in Q$ is a fixed element, then the set $\big\{x-x_0~|~x \in Q \setminus \{ x_0\} \big\}$ is a basis for $I_\Bbbk(Q)$ as a $\Bbbk$-module. 
	\par

In \cite{BPS-1} has been shown that quandle rings of non--trivial quandles over rings of characteristic
other than two and three cannot be alternative or Jordan algebras.

\medskip


The main examples of self-distributive bialgebras can be constructed on rack algebras (see \cite{E} and \cite{ABRW}). Let us define a comultiplication $\Delta \colon \Bbbk[X] \to \Bbbk[X] \otimes \Bbbk[X]$ by the rule $\Delta(x) = x \otimes x$ for any $x \in X$ and extend it to $\Bbbk[X]$ by linearity. It is easy to see that this comultiplication is coassociative. Further, let us define a map $\varepsilon \colon \Bbbk[X] \to \Bbbk$ by $\varepsilon(x) = 1$ for $x \in X$ and extend by linearity to $\Bbbk[X]$. We arrive to

\begin{proposition}
$(\Bbbk[X], \cdot, \Delta,  \varepsilon)$ is a counital self-distributive bialgebra.
\end{proposition}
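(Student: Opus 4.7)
The plan is to verify the four requirements in turn: coassociativity of $\Delta$, the counit axiom for $\varepsilon$, the consistency condition that $\Delta$ is an algebra homomorphism, and finally the self-distributive identity. Since $\Delta$, $\varepsilon$ and the product are all defined on the basis $X$ and extended (multi)linearly, each identity reduces to checking it on basis vectors $x,y,z\in X$.

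First I would treat the coalgebra part. For $x\in X$,
$(\Delta\otimes\id)\Delta(x) = x\otimes x\otimes x = (\id\otimes\Delta)\Delta(x)$,
so coassociativity holds on the basis and therefore on all of $\Bbbk[X]$ by linearity. Similarly $(\varepsilon\otimes\id)\Delta(x) = 1\otimes x$ and $(\id\otimes\varepsilon)\Delta(x) = x\otimes 1$, which both correspond to $x$ under the canonical isomorphisms $\Bbbk\otimes\Bbbk[X]\cong\Bbbk[X]\cong\Bbbk[X]\otimes\Bbbk[X]$.

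Next I would verify that $\Delta$ is an algebra homomorphism. On basis vectors,
$\Delta(x\cdot y) = (x*y)\otimes(x*y)$,
while the componentwise product on $\Bbbk[X]\otimes\Bbbk[X]$ gives
$\Delta(x)\cdot\Delta(y) = (x\otimes x)(y\otimes y) = (x*y)\otimes(x*y)$,
so the two sides agree on $X\times X$. Bilinearity of both $\Delta$ and of the componentwise product on $\Bbbk[X]\otimes\Bbbk[X]$ then extends the identity to all of $\Bbbk[X]$.

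For the self-distributivity identity, the main step is to unwind the right-hand side. For $c\in X$ one has $\Delta(c)=c\otimes c$, so $(a*c^{(1)})*(b*c^{(2)})$ is the single term $(a*c)*(b*c)$, which equals $(a*b)*c$ by the rack axiom (Q3). For arbitrary $c = \sum_k\gamma_k x_k$ with $x_k\in X$, linearity of $\Delta$ gives $\Delta(c)=\sum_k\gamma_k\,x_k\otimes x_k$, and hence
\[
(a*c^{(1)})*(b*c^{(2)}) \;=\; \sum_k \gamma_k\,(a*x_k)*(b*x_k),
\]
while bilinearity of $*$ yields $(a*b)*c = \sum_k\gamma_k\,(a*b)*x_k$. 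So the identity for a general $c$ reduces to the case $c\in X$; once that case is handled, a further reduction by bilinearity of $*$ in the two remaining arguments brings $a$ and $b$ onto $X$ as well, where (Q3) applies.

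The only real obstacle is the correct handling of Sweedler notation in the self-distributivity axiom for non-basis $c$: one must be careful that the expression $(a*c^{(1)})*(b*c^{(2)})$ is genuinely linear in $c$ (which follows from linearity of $\Delta$ together with bilinearity of $*$), after which the identity drops out of axiom (Q3). No further computation is required.
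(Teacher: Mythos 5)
Your verification is correct and is exactly the routine basis-element check that the paper leaves implicit: the paper states this proposition without proof, remarking only that coassociativity ``is easy to see,'' and your reduction of each axiom to basis vectors --- in particular the observation that the Sweedler-form right-hand side $(a*c^{(1)})*(b*c^{(2)})$ is genuinely \emph{linear} in $c$, so that self-distributivity drops out of axiom (Q3) --- is the intended argument. The only slip is typographical: the second canonical isomorphism in the counit check should read $\Bbbk[X]\otimes\Bbbk\cong\Bbbk[X]$, not $\Bbbk[X]\otimes\Bbbk[X]$.
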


	\bigskip

	
	\section{2-dimensional self-distributive non-counital bialgebras}
	\label{class}
	
	\medskip
	In \cite{BKT1} we formulated a question on constructing of self-distributive bialgebras and  gave full classification of such bialgebras with counit in dimension 2 over $\mathbb{C}$ and constructed some such algebras over arbitrary field. Similar results can be found in \cite{E}.

In this section we give a full description of 2-dimensional non-counital  self-distributive bialgebras over arbitrary field.

	\subsection{From  2-dimensional associative algebras to coassotiative coalgebras}
	
		To construct coassotiative coalgebras we take 2-dimensional non-unital associative algebras and go to the dual algebra. 
	
	\begin{proposition}  
	\label{2-dim-alg}
		Let $A$ be a 2-dimensional associative algebra without unit over a field $\Bbbk$. Then there exists a basis $\{e_1, e_2 \}$ in $A$ such that one of the following cases holds:
		
		1) $e_1^2 = e_1,~~e_1 e_2 = 0,~~e_2 e_1 = e_2,~~e_2^2 =0$;
		
		2) $e_1^2 = e_1,~~e_1 e_2 = e_2,~~e_2 e_1 = 0,~~e_2^2 =0$;
		
		3) $e_1^2 = e_1,~~e_1 e_2 = 0,~~e_2 e_1 = 0,~~e_2^2 =0$;
		
		4) $e_1^2 = e_2,~~e_1 e_2 = 0,~~e_2 e_1 = 0,~~e_2^2 =0$;
		
		5) $e_1^2 = 0,~~e_1 e_2 = 0,~~e_2 e_1 = 0,~~e_2^2 =0$.
		
	\end{proposition}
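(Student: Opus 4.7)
The plan is to classify 2-dimensional associative algebras without unit over $\Bbbk$ by splitting on whether $A$ contains a nontrivial idempotent, and then in each branch use associativity plus the absence of a unit to force the multiplication table into one of the five listed forms. I first verify (a routine calculation) that each of the five tables does define an associative algebra without unit; the content is that these exhaust all possibilities.

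\textbf{Case A: $A$ contains an idempotent $e\neq 0$.} I would apply the Peirce decomposition with respect to $e$: for any $x\in A$,
$$ x = exe + (ex-exe) + (xe-exe) + (x-ex-xe+exe), $$
and associativity gives $A = A_{11}\oplus A_{10}\oplus A_{01}\oplus A_{00}$, where $A_{ij}=\{x:ex=ix,\ xe=jx\}$. Since $e\in A_{11}$ and $A_{11}=A$ would make $e$ a two-sided unit (excluded), $A_{11}=\Bbbk e$ and exactly one of $A_{10},A_{01},A_{00}$ is one-dimensional, generated by some $f$. In each subcase one checks that $f^2$ again lies in the same Peirce component, hence $f^2=\lambda f$. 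When $f\in A_{10}$ (resp.\ $f\in A_{01}$), the associativity identity $(fe)f=f(ef)$ (resp.\ $(ef)f=e(f^2)$ applied through $(fe)f$) forces $\lambda=0$, giving case 2 (resp.\ case 1). When $f\in A_{00}$, associativity allows $\lambda\neq 0$, but after rescaling to $f^2=f$ one checks that $e+f$ is a unit, contradicting the hypothesis; so $\lambda=0$ and we recover case 3.

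\textbf{Case B: $A$ contains no nontrivial idempotent.} For any nonzero $x\in A$ either $\{x,x^2\}$ is linearly dependent or it is a basis. If the former holds for every $x$, then $x^2=\lambda_x x$ with $\lambda_x=0$ (otherwise $x/\lambda_x$ is a nontrivial idempotent), so $x^2=0$ identically. Polarising gives $xy+yx=0$ for all $x,y$, and plugging a basis $\{e_1,e_2\}$ into associativity via $(e_1e_2)e_2=e_1(e_2^2)$ and $(e_2e_1)e_1=e_2(e_1^2)$ forces $e_1e_2=e_2e_1=0$, which is case 5. In the remaining subcase some $x$ satisfies $\{x,x^2\}$ linearly independent; write $x^3=\gamma x+\delta x^2$. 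A direct computation shows that $\gamma^{-1}(x^2-\delta x)$ is a two-sided unit whenever $\gamma\neq 0$, contradicting the hypothesis; and if $\gamma=0$ while $\delta\neq 0$ then $\delta^{-2}x^2$ is a nontrivial idempotent, again a contradiction. Hence $\gamma=\delta=0$, so $x^3=0$, and setting $e_1=x$, $e_2=x^2$ gives exactly case 4.

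The main obstacle is the bookkeeping in Case A: one must confirm that the Peirce decomposition is available for a not-necessarily-unital algebra (it is, by the displayed formula above), and that the three subcases $A_{10},A_{01},A_{00}$ are really mutually exclusive in dimension two — both follow from $\dim A=2$ and the exclusion of $A_{11}=A$. A secondary concern is that all key polarisation and Peirce identities used here are polynomial and do not require division by $2$ or $3$, so the argument is uniform in the characteristic of $\Bbbk$; I would flag but not belabour this point.
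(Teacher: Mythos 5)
Your proof is correct, but it takes a genuinely different route from the paper. The paper argues via the Jacobson radical: since a finite-dimensional semisimple algebra is unital, $R(A)\neq 0$, and the classification splits on $\dim R(A)\in\{1,2\}$; in the first case one lifts an idempotent-like element $e_1$ from $A/R\cong\Bbbk$ and solves for the structure constants $\beta,\gamma$ with $\beta^2=\beta$, $\gamma^2=\gamma$, and in the second case $A=R$ is nilpotent with $A^3=0$. You instead split on whether $A$ contains a nonzero idempotent, use the two-sided Peirce decomposition $A=A_{11}\oplus A_{10}\oplus A_{01}\oplus A_{00}$ (valid for non-unital associative algebras via the displayed formula) to handle the idempotent case, and a direct power computation ($x^3=\gamma x+\delta x^2$, with $\gamma\neq 0$ producing a unit and $\gamma=0\neq\delta$ producing an idempotent) to handle the idempotent-free case. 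The two case divisions match up ($\dim R=1$ corresponds to your Case A, $\dim R=2$ to your Case B), but your argument is more self-contained: it avoids invoking the radical and the unitality of semisimple algebras, and it makes the independence of the characteristic explicit. The paper's route is shorter if one takes the structure theory for granted. One small imprecision: in the subcase $f\in A_{01}$ the identity you want is $(fe)f=f(ef)$, which gives $\lambda f=f^2=(fe)f=f(ef)=f\cdot 0=0$; the expression ``$(ef)f=e(f^2)$'' as written only yields $0=0$. This is a slip of notation, not a gap, since the correct associativity instance on the triple $(f,e,f)$ is exactly the one you already use in the $A_{10}$ subcase.
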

	
	\begin{proof} Let $A$ be a two-dimensional associative algebra over a field $\Bbbk$ that does not contain a unit. Since every semisimple finite-dimensional algebra contains a unit, the radical $R=R(A)$ is nonzero. We have $R^3 = 0$ because $\dim R \le 2$. Let us consider two cases.
	
	\medskip 1) $\mathrm{dim}(R) = 1$. Then $R^2 = 0$ and $R = \Bbbk e_2$. The algebra $A/R$ is a one-dimensional semisimple associative algebra, i.e. it is isomorphic to the field $\Bbbk$. Thus, $A$ contains an element $e_1\notin R$ such that $e_1^2 = e_1 + \alpha e_2$ for some $\alpha\in \Bbbk$. In addition, $e_1e_2 = \beta e_2$, $e_2e_1 = \gamma e_2$. Then
	\[
	\beta^2 e_2 = \beta e_1e_2 = e_1(e_1e_2) = e_1^2e_2 = (e_1+\alpha e_2)e_2 = \beta e_2,
	\]
	whence $\beta^2 = \beta$. Similarly, from the equality $(e_2e_1)e_1 = e_2e_1^2$ we obtain $\gamma^2 = \gamma$. Let us consider several subcases.
	
	\medskip 1.a) $\beta = \gamma = 1$. Then $\alpha\neq 0$ (otherwise $e_1$ is the unit of the algebra $A$). We can assume that $\alpha = 1$ (considering $\alpha e_2$ instead of $e_2$). But then the element $f = e_2-e_1$ is the identity of the algebra $A$, we have a contradiction.
	
	1.b) $\beta = 0$, $\gamma = 1$. We have $e_1^2e_1 = (e_1+\alpha e_2)e_1 = e_1+2\alpha e_2$, $e_1e_1^2 = e_1(e_1+\alpha e_2) = e_1+\alpha e_2$, whence $\alpha = 0$. We obtain the following multiplication table:
	\[e_1^2 = e_1, \quad e_1e_2 = 0, \quad e_2e_1 = e_2, \quad e_2^2 = 0.\]
	
	1.c) $\beta = 1$, $\gamma = 0$. Similar to the previous case, we obtain the following multiplication table:
	\[e_1^2 = e_1, \quad e_1e_2 = e_2, \quad e_2e_1 = 0, \quad e_2^2 = 0.\]
	
	1.d) $\beta = \gamma = 0$. Then the element $f_1 = e_1+\alpha e_2$ gives the following multiplication table:
	\[f_1^2 = f_1, \quad f_1e_2 = e_2f_1 = e_2^2 = 0.\]
	
	\medskip 2) $\mathrm{dim}(R) = 2$. Then $A = R$ and $A^3 = 0$. Suppose that $A^2\neq 0$. Then $A^2 = \Bbbk e_2$ and $A=\Bbbk e_1+ \Bbbk e_2$ for some $e_1,e_2\in A$.  Note that $e_2^2 = e_2e_1 = e_1e_2 = 0$, since $e_2^2, e_1e_2,e_2e_1\in A^3$. Moreover, $e_1^2 = \alpha e_2$. If $\alpha = 0$, then $A^2 = 0$. If $\alpha \neq 0$, then we can assume that $\alpha = 1$ and we get the following multiplication table:
	\[e_1^2 = e_2, \quad e_1e_2 = e_2e_1 = e_2^2 = 0.\]
	\end{proof}
	
\begin{remark}
The first algebra in this proposition is an algebra of the trivial quandle $T_2$. Indeed, if $T_2 = \{ t_1, t_2 \}$, then the elements $t = t_1$ and $\tau = t_1 - t_2$ form a basis of $\Bbbk[T_2]$. The multiplication table for this basis  has the form
$$
t^2 = t,~~\tau^2 = 0,~~ t \tau = 0, ~~\tau t = \tau.
$$
We have the algebra 1) from 
 Proposition \ref{2-dim-alg}.

\end{remark}
	
\medskip
	
	Using these algebras, we construct 2-dimensional coassotiative coalgebras
	$A^* = \langle f_1, f_2 \rangle$ which are  dual algebras to the algebras with basis $e_1, e_2$. The last algebra  is associative and since we assume that it does not have the unit, it has a multiplication of 1) -- 5). 
	Then, by the definition of dual basis, we have
	$$
	f_1(e_1) = 1, f_1(e_2) = 0,~~f_2(e_1) = 0, f_2(e_2) = 1.
	$$
	By the definition of comultiplication,
	$$
	\Delta f_k (e_i, e_j) = f_k (e_i e_j),~~~i, j, k \in \{ 1, 2 \},
	$$
	Further, we are considering all possible cases.
	
	1) Multiplication 
	$$
	e_1^2 = e_1,~~e_1 e_2 = 0,~~e_2 e_1 = e_2,~~e_2^2 =0.
	$$
	In this case we have the formulas for $f_1$,
	$$
	\Delta f_1 (e_1,  e_1) = f_1(e_1) = 1,~~\Delta f_1 (e_1,  e_2) = f_1(0) = 0,
$$
$$
	\Delta f_1 (e_2,  e_1) = f_1(e_2) = 0,~~\Delta f_1 (e_2,  e_2) = f_1(0) = 0.
	$$
	Formulas for $f_2$, 
	$$
	\Delta f_2 (e_1,  e_1) = f_2(e_1) = 0,~~\Delta f_2 (e_1,  e_2) = f_2(0) = 0,
	$$
	$$
	\Delta f_2 (e_2,  e_1) = f_2(e_2) = 1,~~\Delta f_2 (e_2,  e_2) = f_2(0) = 0.
	$$
	Hence, the  comultiplication has the form
	$$
	\Delta f_1 = f_1 \otimes f_1,~~\Delta f_2 =  f_2 \otimes f_1.
	$$
	
	2) Multiplication 
	$$
	e_1^2 = e_1,~~e_1 e_2 = e_2,~~e_2 e_1 = 0,~~e_2^2 =0.
	$$
	In this case we have the formulas for $f_1$,
	$$
	\Delta f_1 (e_1,  e_1) = f_1(e_1) = 1,~~\Delta f_1 (e_1,  e_2) = f_1(e_2) = 0,
	$$
	$$
	\Delta f_1 (e_2,  e_1) = f_1(0) = 0,~~\Delta f_1 (e_2,  e_2) = f_1(0) = 0.
	$$
	Formulas for $f_2$, 
	$$
	\Delta f_2 (e_1,  e_1) = f_2(e_1) = 0,~~\Delta f_2 (e_1,  e_2) = f_2(e_2) = 1,
	$$
	$$
	\Delta f_2 (e_2,  e_1) = f_2(0) = 0,~~\Delta f_2 (e_2,  e_2) = f_2(0) = 0.
	$$
	Hence, the  comultiplication has the form
	$$
	\Delta f_1 = f_1 \otimes f_1,~~\Delta f_2 =  f_1 \otimes f_2.
	$$
	
	3) Multiplication 
	$$
	e_1^2 = e_1,~~e_1 e_2 = 0,~~e_2 e_1 = 0,~~e_2^2 =0.
	$$
	In this case we have the formulas for $f_1$,
	$$
	\Delta f_1 (e_1,  e_1) = f_1(e_1) = 1,~~\Delta f_1 (e_1,  e_2) = f_1(0) = 0,
	$$
	$$
	\Delta f_1 (e_2,  e_1) = f_1(0) = 0,~~\Delta f_1 (e_2,  e_2) = f_1(0) = 0.
	$$
	Formulas for $f_2$, 
	$$
	\Delta f_2 (e_1,  e_1) = f_2(e_1) = 0,~~\Delta f_2 (e_1,  e_2) = f_2(0) = 0,
	$$
	$$
	\Delta f_2 (e_2,  e_1) = f_2(0) = 0,~~\Delta f_2 (e_2,  e_2) = f_2(0) = 0.
	$$
	Hence, the  comultiplication has the form
	$$
	\Delta f_1 = f_1 \otimes f_1,~~\Delta f_2 = 0.
	$$
	
	4) Multiplication 
	$$
	e_1^2 = e_2,~~e_1 e_2 = 0,~~e_2 e_1 = 0,~~e_2^2 =0.
	$$
	In this case we have the formulas for $f_1$,
	$$
	\Delta f_1 (e_1,  e_1) = f_1(e_2) = 0,~~\Delta f_1 (e_1,  e_2) = f_1(0) = 0,
	$$
	$$
	\Delta f_1 (e_2,  e_1) = f_1(0) = 0,~~\Delta f_1 (e_2,  e_2) = f_1(0) = 0.
	$$
	Formulas for $f_2$, 
	$$
	\Delta f_2 (e_1,  e_1) = f_2(e_2) = 1,~~\Delta f_2 (e_1,  e_2) = f_2(0) = 0,
	$$
	$$
	\Delta f_2 (e_2,  e_1) = f_2(0) = 0,~~\Delta f_2 (e_2,  e_2) = f_2(0) = 0.
	$$
	Hence, the  comultiplication has the form
	$$
	\Delta f_1 = 0,~~\Delta f_2 = f_2 \otimes f_2.
	$$
	
	5) In this case we have zero multiplication. Hence, comultiplication also will be zero.
	
	We proved the following proposition.

	\begin{proposition}
		All non-counital coassociative comultiplications on a 2-dimensional space can be brought to one of the following types by a linear change of variables ($\{ f_1, f_2 \}$ is an appropriate basis):
		
		1) $\Delta f_1 = f_1 \otimes f_1,~~\Delta f_2 =  f_1 \otimes f_2;$
		
		2) $\Delta f_1 = f_1 \otimes f_1,~~\Delta f_2 =  f_2 \otimes f_1;$
		
		3) $\Delta f_1 = f_1 \otimes f_1,~~\Delta f_2 = 0$;
		
		4) $\Delta f_1 = 0,~~\Delta f_2 = f_2 \otimes f_2.$
		
		5) $\Delta f_1 = 0,~~\Delta f_2 = 0.$
		
	\end{proposition}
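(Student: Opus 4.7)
The plan is to exploit the duality between 2-dimensional coalgebras and 2-dimensional algebras: in finite dimensions, a comultiplication $\Delta\colon C \to C\otimes C$ is coassociative if and only if the transposed map $\Delta^*\colon C^*\otimes C^* \to C^*$ defines an associative product on $C^*$, and $\Delta$ admits no counit exactly when $\Delta^*$ has no unit. Moreover, a linear change of basis in $C$ induces the transposed linear change of basis in $C^*$, so isomorphism classes match on the two sides. Hence classifying 2-dimensional non-counital coassociative coalgebras up to linear change of variables is equivalent to classifying 2-dimensional non-unital associative algebras up to linear change of variables, which is precisely Proposition \ref{2-dim-alg}.

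The first step is to state this duality carefully, fix a basis $\{e_1,e_2\}$ of an arbitrary 2-dimensional non-counital coalgebra viewed via its dual, and let $\{f_1,f_2\}$ be the dual basis, so that $f_k(e_i)=\delta_{ki}$. Then for any comultiplication one has the defining relation
$$
\Delta f_k(e_i,e_j) = f_k(e_i e_j), \qquad i,j,k\in\{1,2\},
$$
where the multiplication on $\{e_1,e_2\}$ is the one on the dual algebra. The second step is to apply Proposition \ref{2-dim-alg}: after a suitable change of basis of $\{e_1,e_2\}$ the multiplication table falls into one of the five listed cases 1)--5).

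The third step is a direct computation: for each of the five multiplication tables, read off the values $\Delta f_k(e_i,e_j)$ from the formula above and reconstruct $\Delta f_k$ as an element of $C^*\otimes C^*$ using the identification $(C\otimes C)^* \cong C^*\otimes C^*$ provided by the basis $\{f_i\otimes f_j\}_{i,j}$. This is precisely the calculation already carried out in the excerpt before the proposition statement, case by case, and it yields the five comultiplication types 1)--5) listed in the proposition.

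The main obstacle, such as there is one, is justifying that no coassociative comultiplications are lost by this procedure: one has to argue that every 2-dimensional non-counital coassociative coalgebra is obtained as the dual of some 2-dimensional non-unital associative algebra, and that the equivalence relation ``linear change of variables on the coalgebra'' corresponds bijectively to ``linear change of basis on the algebra'' under the duality $C\leftrightarrow C^*$. Both facts are standard in the finite-dimensional setting (the double dual is canonically identified with $C$, and transposition of an invertible matrix is invertible), so the classification is complete.
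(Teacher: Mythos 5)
Your proposal follows essentially the same route as the paper: dualize the classification of 2-dimensional non-unital associative algebras (Proposition \ref{2-dim-alg}) and read off the comultiplication tables case by case via $\Delta f_k(e_i,e_j)=f_k(e_ie_j)$. The only difference is that you make explicit the completeness argument (no coalgebra is lost under the duality and change-of-basis correspondence), which the paper leaves implicit; this is a welcome but minor addition.
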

	
		\subsection{Self-distributive bialgebras with comultiplication 1) and 2)}
	
	In this subsection we are considering a bialgebra $A$ which has a basis $x, y$, comultiplication is defined by formulas 1),
	$$
	\Delta x = x \otimes x,~~\Delta y = x \otimes y.
	$$
	We want to define multiplication
	$$
	x^2 = a_1 x + a_2 y,~~x y = b_1 x + b_2 y,~~y x = c_1 x + c_2 y,~~y^2= d_1 x + d_2 y,
	$$
	such that the coproduct is a homomorphism of algebra $A$, i.~e. for any $a, b \in A$ holds
	\begin{equation} \label{gom}
		\Delta (a b) = \Delta(a) \Delta(b),
	\end{equation}
	and this bialgebra is self-distributive that means that for any $a, b \in A$ holds 
	\begin{equation} \label{SD}
		(a b) c = (a c^{(1)}) (b c^{(2)}).
	\end{equation}
	
	Let us check condition (\ref{gom}) for basis elements $x$ and $y$.
	
	1) The equality $\Delta (x x) = \Delta(x) \Delta(x)$ is equivalent to the equality
	$$
	a_1 x \otimes x + 	a_2 x \otimes y = (x \otimes x) (x \otimes x) \Leftrightarrow  a_1 x \otimes x 	a_2 x \otimes y = (a_1 x + a_2 y) \otimes (a_1 x + a_2 y).
	$$
	Transforming the right hand side, we get
	$$
	a_1 x \otimes x + a_2 x \otimes y = a_1^2  x \otimes x + a_1 a_2 x \otimes y + a_1 a_2 y \otimes  x + a_2^2 y \otimes y.
	$$
	This equality holds if and only if the following system has solutions in $\Bbbk$,
	$$
	\begin{cases}
		a_1^2 =a_1, \\
		a_1 a_2 = 0, \\
	a_2 = a_1 a_2, \\
		a_2^2 = 0.
	\end{cases}
	$$
	Hence, $a_1 = 0$, or $a_1 = 1$, in both cases $a_2 = 0$.
	
	2) The equality $\Delta (y y) = \Delta(y) \Delta(y)$ is equivalent to the equality
	$$
d_1 x \otimes x + d_2 x \otimes y = a_1 d_1  x \otimes x + a_1 d_2 x \otimes y + a_2 d_1 y \otimes  x + a_2 d_2 y \otimes y.
	$$
	This equality holds if and only if the following system has solutions in $\Bbbk$,
	$$
	\begin{cases}
		a_1 d_1 =d_1, \\
		a_1 d_2 = d_2, \\
			a_2 d_1 = 0.\\
a_2 d_2 = 0. 
	\end{cases}
	$$
Since $a_2 = 0$, we see that $a_1 = 1$ and $d_1, d_2$ are arbitrary,  or $a_1 \not= 1$ and $d_1 = d_2 = 0$.

Comparing with the case 1) we get two possible multiplications

M1: $	x^2 = 0,~~x y = b_1 x + b_2 y,~~y x = c_1 x + c_2 y,~~y^2= 0,$

M2: $	x^2 = x,~~x y = b_1 x + b_2 y,~~y x = c_1 x + c_2 y,~~y^2= d_1 x + d_2 y.$
	
{\it Multiplication M1}.  Consider the equality $\Delta (x y) = \Delta(x) \Delta(y)$ is equivalent to the equality
	$$
b_1 x \otimes x + b_2 x \otimes y = 0.
	$$
Hence, $b_1 = b_2 = 0$.

Consider the equality $\Delta (y x) = \Delta(y) \Delta(x)$ is equivalent to the equality
	$$
c_1 x \otimes x + c_2 x \otimes y = 0.
	$$
Hence, $c_1 = c_2 = 0$.

We see, that the multiplication M1 gives an algebra with zero multiplication.

{\it Multiplication M2}.  Consider the equality $\Delta (x y) = \Delta(x) \Delta(y)$ is equivalent to the equality
	$$
b_1 x \otimes x + b_2 x \otimes y = b_1 x \otimes x + b_2 x \otimes y,
	$$
which is true for arbitrary  $b_1$ and  $b_2$.

Consider the equality $\Delta (y x) = \Delta(y) \Delta(x)$ is equivalent to the equality
	$$
c_1 x \otimes x + c_2 x \otimes y = c_1 x \otimes x + c_2 x \otimes y,
	$$	
which is true for arbitrary  $c_1$ and  $c_2$.	

Consider the equality $\Delta (y y) = \Delta(y) \Delta(y)$ is equivalent to the equality
	$$
d_1 x \otimes x + d_2 x \otimes y = d_1 x \otimes x + d_2 x \otimes y,
	$$	
which is true for arbitrary  $d_1$ and  $d_2$.

	Further, we analyze self-distributivity axiom (\ref{SD}). It is enough to check it only for basis elements.

1) If $a = b = c = x$, then $(x x) x = (x x^{(1)}) (x x^{(2)})$. Since $x^{(1)} = x^{(2)} = x$, we  get $x = x$.
	
2) If $a = x,  b = y,  c = x$, then $(x y) x = (x x ^{(1)}) (y x^{(2)})$. Since $x^{(1)} = x$, $x^{(2)} = x$, we  get the equality
$$
(b_1 + b_2 c_1) x + (b_2 c_2) y =  (c_1  + c_2 b_1 ) x + (c_2  b_2) y,
$$	
which equivalent to the equality:
	$b_1 +b_2 c_1 = c_1+ c_2b_1, \\$
So, $$ b_{1}=\frac{c_{1}\left( 1-b_{2}\right)  }{1-c_2}, c_{2}\neq 1 .$$	

3) If $a = x,  b = x,  c = y$, then $(x x) y = (x y ^{(1)}) (x y^{(2)})$. Since $y^{(1)} = x$, $y^{(2)} = y$, we  get the equality
$$
b_1  x + b_2 y =  (b_1  +  b_1 b_2 ) x + b_2^2 y,
$$	
which equivalent to the system:
$$
	\begin{cases}
b_1 b_2 = 0, \\
b_1  =  b_2^2. 
	\end{cases}
$$

If $b_1 = 0$, then $b_2=0, c_1=0, c_{2}\neq 1$.

If $b_1 = 0$ and  $b_2=1$, then $c_{2}\neq 1$,  $b_2$ can be arbitrary.

If $b_2 = 0$, then    $b_{1}=\frac{c_{1}} {1-c_2} $,  $c_{2}\neq 1$,  $c_1$ can be arbitrary.

Analyzing other possibilities, we get

	\begin{proposition} \label{p34}
		Let $A$ be a vector space with a basis $x, y$, on which is defined a comultiplication
		$$
		\Delta x =  x \otimes x,~~\Delta y = x \otimes y.
		$$
		Then the multiplication 
		
a) $~x^2 =0,~~xy = 0,~~y x = 0,~~y^2 = 0$;

b)		$~x^2 =x,~~xy = 0,~~y x = c y,~~y^2 = 0,~~c \in \Bbbk$;

c)		$~x^2 =x,~~xy = y,~~y x = 0,~~y^2 = 0$;

d)		$~x^2 =x,~~xy = y,~~y x = c x,~~y^2 = c y,~~c \in \Bbbk;$

e)		$~x^2 =x,~~xy = y,~~y x = c x -y,~~y^2 = d x,~~c, d\in \Bbbk$;

f)		$~x^2 =x,~~xy = y,~~y x = -y,~~y^2 = d x,~~ d  \in \Bbbk$;

g)		$~x^2 =x,~~xy = y,~~y x = c y,~~y^2 = 0,~~c  \in \Bbbk$;

h)		$~x^2 =x,~~xy = c x,~~y x = c x,~~y^2 = c^2 x,~~c_1 \in \Bbbk$;

k)		$~x^2 =x,~~xy = \frac{c}{2} x,~~y x = c x-y,~~y^2 = \frac{c^2}{2} x -  \frac{c}{2} y,~~c \in \Bbbk$;

l)		$~x^2 =x,~~xy = y,~~y x = c_1x+c_2y,~~y^2 = \frac{-c^{2}_{1}c_{2}}{\left( 1-c_2\right)^{2}  }  x + \frac{c_{1}\left( c_{2}+1\right)  }{1-c_{2}}  y ,~~c_1,c_2 \in \Bbbk , c_{2}\neq 1$.

\noindent
		gives a non-counital self-distributive bialgebra. 
	\end{proposition}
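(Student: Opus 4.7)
The plan is to combine the bialgebra consistency condition $\Delta(uv) = \Delta(u)\Delta(v)$ with the self-distributivity identity (\ref{SD}) and read off the admissible multiplication tables. The preceding analysis has already reduced the homomorphism constraint to the two candidates M1 and M2; M1 collapses to the zero multiplication, which is case a), so the genuine work is entirely in M2, where $x^2 = x$ and the six scalars $b_1, b_2, c_1, c_2, d_1, d_2$ defined by $xy = b_1 x + b_2 y$, $yx = c_1 x + c_2 y$, $y^2 = d_1 x + d_2 y$ are still free.

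First I would expand the self-distributivity identity $(ab)c = (ac^{(1)})(bc^{(2)})$ for each of the eight triples $(a,b,c) \in \{x,y\}^3$. Since $\Delta x = x \otimes x$ and $\Delta y = x \otimes y$, the right hand side is always a single tensor term, and comparing the $x$ and $y$ coefficients in each identity produces a polynomial system in the six unknowns. The triples $(x,x,x)$ and $(y,x,x)$ reduce to tautologies; the remaining ones include the constraints that have already been initiated in the paragraphs preceding the statement.

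Next I would solve the system by branching. The tightest initial constraints come from $(x,x,y)$, which forces $b_1 b_2 = 0$ together with $b_2(b_2-1) = 0$, and from $(x,y,x)$, which yields $b_1(1-c_2) = c_1(1-b_2)$. Splitting on $b_2 \in \{0,1\}$ and then on whether $c_2 = 1$ (the degenerate case that has to be excluded in item l)) produces a manageable case tree. Within each branch the triples involving two $y$'s, namely $(y,y,x)$, $(x,y,y)$, $(y,x,y)$, and $(y,y,y)$, are either automatically satisfied or determine $d_1, d_2$ as (possibly rational) expressions in $c_1, c_2$, producing the entries b)--l) one after another.

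The main obstacle is the bookkeeping in this case split: the identities involving two $y$'s are quadratic in the parameters, denominators of the form $1-c_2$ appear in the solutions (explaining the condition $c_2 \neq 1$ in item l)), and several branches produce multiplication tables that look formally different but are in fact specializations of item l) or coincide after a trivial rescaling, so one must verify both that b)--l) are genuinely distinct families and that together with a) they exhaust all solutions. Once a parametric solution for each branch is isolated, sufficiency is confirmed by a direct substitution back into each of the eight self-distributivity identities.
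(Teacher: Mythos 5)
Your proposal follows essentially the same route as the paper: use the consistency condition $\Delta(uv)=\Delta(u)\Delta(v)$ to reduce to the two candidate tables M1 (zero multiplication, giving case a)) and M2 ($x^2=x$ with six free parameters $b_1,b_2,c_1,c_2,d_1,d_2$), then impose self-distributivity on the eight basis triples and branch on $b_2\in\{0,1\}$ and on whether $c_2=1$. The only difference is cosmetic: you state the $(x,x,y)$ constraint correctly as $b_1b_2=0$ together with $b_2^2=b_2$, whereas the paper's displayed system has a typo ($b_1=b_2^2$); otherwise the case tree and the resulting families b)--l) are obtained in the same way.
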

	
	\medskip

Now we are considering a bialgebra $A$ which has a basis $x, y$, comultiplication is defined by formulas 2),
	$$
	\Delta x = x \otimes x,~~\Delta y = y \otimes x.
	$$
	We want to define multiplication
	$$
	x^2 = a_1 x + a_2 y,~~x y = b_1 x + b_2 y,~~y x = c_1 x + c_2 y,~~y^2= d_1 x + d_2 y,
	$$
	such that the coproduct is a homomorphism of algebra $A$, i.~e. for any $a, b \in A$ holds
	\begin{equation} \label{gom}
		\Delta (a b) = \Delta(a) \Delta(b),
	\end{equation}
	and this bialgebra is self-distributive that means that for any $a, b \in A$ holds 
	\begin{equation} \label{SD}
		(a b) c = (a c^{(1)}) (b c^{(2)}).
	\end{equation}
	
	Let us check condition (\ref{gom}) for basis elements $x$ and $y$.
	
	1) The equality $\Delta (x x) = \Delta(x) \Delta(x)$ is equivalent to the equality
	$$
	a_1 x \otimes x + 	a_2 x \otimes y = (x \otimes x) (x \otimes x) \Leftrightarrow  a_1 x \otimes x 	a_2 x \otimes y = (a_1 x + a_2 y) \otimes (a_1 x + a_2 y).
	$$
	Transforming the right hand side, we get
	$$
	a_1 x \otimes x + a_2 x \otimes y = a_1^2  x \otimes x + a_1 a_2 x \otimes y + a_1 a_2 y \otimes  x + a_2^2 y \otimes y.
	$$
	This equality holds if and only if the following system has solutions in $\Bbbk$,
	$$
	\begin{cases}
		a_1^2 =a_1, \\
		a_1 a_2 = 0, \\
	a_2 = a_1 a_2, \\
		a_2^2 = 0.
	\end{cases}
	$$
	Hence, $a_1 = 0$, or $a_1 = 1$, in both cases $a_2 = 0$.
	
	2) The equality $\Delta (y y) = \Delta(y) \Delta(y)$ is equivalent to the equality
	$$
d_1 x \otimes x + d_2 y \otimes x = a_1 d_1  x \otimes x + a_2 d_1 x \otimes y + a_1 d_2 y \otimes  x + a_2 d_2 y \otimes y.
	$$
	This equality holds if and only if the following system has solutions in $\Bbbk$,
	$$
	\begin{cases}
		a_1 d_1 =d_1, \\
		a_1 d_2 = d_2, \\
			a_2 d_1 = 0.\\
a_2 d_2 = 0. 
	\end{cases}
	$$
Since $a_2 = 0$, we see that $a_1 = 1$ and $d_1, d_2$ are arbitrary,  or $a_1 \not= 1$ and $d_1 = d_2 = 0$.

Comparing with the case 1) we get two possible multiplications

M1: $	x^2 = 0,~~x y = b_1 x + b_2 y,~~y x = c_1 x + c_2 y,~~y^2= 0,$

M2: $	x^2 = x,~~x y = b_1 x + b_2 y,~~y x = c_1 x + c_2 y,~~y^2= d_1 x + d_2 y.$
	
{\it Multiplication M1}.  Consider the equality $\Delta (x y) = \Delta(x) \Delta(y)$ is equivalent to the equality
	$$
b_1 x \otimes x + b_2  y \otimes x = 0.
	$$
Hence, $b_1 = b_2 = 0$.

Consider the equality $\Delta (y x) = \Delta(y) \Delta(x)$ is equivalent to the equality
	$$
c_1 x \otimes x + c_2 y \otimes x = 0.
	$$
Hence, $c_1 = c_2 = 0$.

We see, that the multiplication M1 gives an algebra with zero multiplication.

{\it Multiplication M2}.  Consider the equality $\Delta (x y) = \Delta(x) \Delta(y)$ is equivalent to the equality
	$$
b_1 x \otimes x + b_2 y \otimes x = b_1 x \otimes x + b_2 y \otimes x,
	$$
which is true for arbitrary  $b_1$ and  $b_2$.

Consider the equality $\Delta (y x) = \Delta(y) \Delta(x)$ is equivalent to the equality
	$$
c_1 x \otimes x + c_2 y \otimes x = c_1 x \otimes x + c_2 y \otimes x,
	$$	
which is true for arbitrary  $c_1$ and  $c_2$.	

The equality $\Delta (y y) = \Delta(y) \Delta(y)$ is equivalent to the equality
	$$
d_1 x \otimes x + d_2 y \otimes x = d_1 x \otimes x + d_2 y \otimes x,
	$$	
which is true for arbitrary  $d_1$ and  $d_2$.

	Further, we analyze self-distributivity axiom (\ref{SD}). It is enough to check it only for basis elements.

1) If $a = b = c = x$, then $(x x) x = (x x^{(1)}) (x x^{(2)})$. Since $x^{(1)} = x^{(2)} = x$, we  get $x = x$.
	
2) If $a = x,  b = y,  c = x$, then $(x y) x = (x x ^{(1)}) (y x^{(2)})$. Since $x^{(1)} = x$, $x^{(2)} = x$, we  get the equality
$$
(b_1 + b_2 c_1) x + (b_2 c_2) y =  (c_1  + c_2 b_1 ) x + (c_2  b_2) y,
$$	
which equivalent to the equality:
	$b_1 +b_2 c_1 = c_1+ c_2b_1, \\$
So, $$ b_{1}=\frac{c_{1}\left( 1-b_{2}\right)  }{1-c_2}, c_{2}\neq 1 .$$	

3) If $a = x,  b = x,  c = y$, then $(x x) y = (x y ^{(1)}) (x y^{(2)})$. Since $y^{(1)} = y$, $y^{(2)} = x$, we  get the equality
$$
b_1  x + b_2 y =  (b_1  +  b_2 c_1 ) x + b_2 c_2 y,
$$	
which equivalent to the system:
$$
	\begin{cases}
b_2 c_1 = 0, \\
b_2  = b_2 c_2. 
	\end{cases}
$$

If $b_2 = 0$, then $b_1=\frac{c_{1}}{1-c_{2}} ,  c_{2}\neq 1$, $c_1$ can be arbitrary.

If $c_1 = 0$, then    $b_{1}=0$,  $c_{2}\neq 1$,  $b_2$ can be arbitrary.

Analyzing other possibilities, we get

\begin{proposition} \label{c42}
		Let $A$ be a vector space with a basis $x, y$, on which is defined a comultiplication
		$$
\Delta x =  x \otimes x,~~\Delta y = y \otimes x.
		$$
		Then the multiplication 

a)		$~x^2 =0,~~xy = 0,~~y x = 0,~~y^2 = 0$;

b)		$~x^2 =x,~~xy = 0,~~y x = cy,~~y^2 = 0,~~c\in \Bbbk$;

c)		$~x^2 =x,~~xy = c x,~~y x = cx,~~y^2 = c^2x,~~c \in \Bbbk$;

d)		$~x^2 =x,~~xy = b y,~~y x = c y,~~y^2 =0 ,~~b, c \in \Bbbk$;

e)		$~x^2 =x,~~xy =  \frac{c_{1}  }{1-c_{2}} x,~~y x = c_1x+c_2y,~~y^2 = \frac{c^{2}_{1}}{1-c_{2}  }  x + \frac{c_{1} c_{2} }{1-c_{2}}  y ,~~~c_1, c_2 \in \Bbbk, c_{2}\neq 1$;
		gives a non-counital self-distributive bialgebra. 
	\end{proposition}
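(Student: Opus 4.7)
The plan is to follow the same two-step scheme as in Proposition \ref{p34}: first impose the consistency condition $\Delta(ab) = \Delta(a)\Delta(b)$ on all ordered pairs of basis elements to constrain the parameters of the general multiplication
$$x^2 = a_1 x + a_2 y, \quad xy = b_1 x + b_2 y, \quad yx = c_1 x + c_2 y, \quad y^2 = d_1 x + d_2 y,$$
and then impose self-distributivity $(ab)c = (ac^{(1)})(bc^{(2)})$ on all ordered triples of basis elements. The consistency analysis has already been carried out in the excerpt: it splits into the trivial case M1 ($x^2 = 0$), in which all products are forced to vanish and yield family~a), and the case M2 ($x^2 = x$), in which $b_i, c_i, d_i$ are a priori unrestricted by consistency alone.

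For M2 I would process the eight triples $(a,b,c) \in \{x,y\}^3$ in turn. The triple $(x,x,x)$ is tautological; $(x,y,x)$ and $(x,x,y)$ have been handled in the excerpt and produce the relations
$$b_1(1-c_2) = c_1(1-b_2), \qquad b_2 c_1 = 0, \qquad b_2(1-c_2) = 0.$$
The remaining five triples $(y,x,x),(y,y,x),(x,y,y),(y,x,y),(y,y,y)$ are unfolded using $\Delta y = y \otimes x$, and each yields a scalar equation. Collectively they pin down $d_1$ and $d_2$ as specific rational functions of $b_i, c_i$ and impose cross relations that reduce the moduli. It is crucial here that the comultiplication differs from that of Proposition~\ref{p34} only by the flip in the second tensor factor of $\Delta y$, so the same triples produce a parallel but genuinely different system of polynomial equations; this accounts for the fact that the resulting families~b)--e) are analogous to but not identical with b)--l) of Proposition \ref{p34}.

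The final step is to solve the system by branching on the alternatives forced by $b_2 c_1 = 0$ and $b_2(1-c_2) = 0$. The branch $b_1 = b_2 = c_1 = 0$ together with the conditions coming from the $y^2$-triples yields family~b); the branch $b_2 = 0$, $c_1 \neq 0$, $c_2 \neq 1$ forces $b_1 = c_1/(1-c_2)$, $d_1 = c_1^2/(1-c_2)$, $d_2 = c_1 c_2/(1-c_2)$, which is family~e); the degenerate branches in which $b_2 \neq 0$ (so $c_2 = 1$ and $c_1 = 0$) reduce, after simplification of the $y$-triples, to families~c) and d). The main obstacle is the bookkeeping in the last two triples $(y,x,y)$ and $(y,y,y)$, which produce quadratic relations in $c_1, c_2$ that one must recognize as equivalent to the closed-form expressions for $d_1, d_2$ in family~e); in particular one has to check that the equations forced by $(y,y,y)$ are redundant given those from $(y,x,y)$ and $(y,y,x)$, so that no spurious constraints survive. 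Once the solution set is enumerated, each of the five listed multiplications is verified by direct substitution to satisfy both the consistency and the self-distributivity axioms, closing the proof.
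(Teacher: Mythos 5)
Your overall scheme---impose the consistency condition $\Delta(ab)=\Delta(a)\Delta(b)$ to reduce to the cases M1 and M2, then run the self-distributivity identity over the eight triples of basis elements and solve the resulting polynomial system---is exactly the paper's; the paper likewise writes out only the triples $(x,x,x)$, $(x,y,x)$, $(x,x,y)$ and dismisses the rest with ``analyzing other possibilities.'' So in structure the proposal matches the source.

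However, the case analysis you sketch does not close, and the failure is concrete. From $(x,x,y)$ you correctly extract $b_2c_1=0$ and $b_2(1-c_2)=0$. The branch $b_2\neq 0$ therefore forces $c_1=0$ \emph{and} $c_2=1$, so it cannot produce family c) (which has $b_2=0$: there $xy=cx$, and in fact c) is precisely the specialization $c_2=0$ of family e)), and it produces family d) only with $c=1$. Worse, family d) with $b\neq 0$ and $c\neq 1$ violates the relation $b_2(1-c_2)=0$ that you yourself derived: for instance, with $x^2=x$, $xy=y$, $yx=0$ one has $(xx)y=xy=y$, while $(xy^{(1)})(xy^{(2)})=(xy)(xx)=yx=0$, since $\Delta y=y\otimes x$. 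So the final ``verification by direct substitution'' you promise would fail for d) as stated, and no branch of your system can recover it. (This tension is already latent in the paper's own proof, whose branch ``$c_1=0$, $c_2\neq 1$, $b_2$ arbitrary'' contradicts the derived equation $b_2=b_2c_2$; a complete argument must either restrict family d) to $b(1-c)=0$ or exhibit an error in the $(x,x,y)$ computation.) Until this is resolved, the enumeration of families b)--e) is not established by the proposed argument.
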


	\subsection{Self-distributive bialgebras with comultiplication 3) and 4)}
	
	In this subsection we are considering a bialgebra $A$ which has a basis $x, y$, comultiplication is defined by formulas 3),
	$$
	\Delta x = x \otimes x,~~\Delta y = 0.
	$$
	We want to define multiplication
	$$
	x x = a_1 x + a_2 y,~~x y = b_1 x + b_2 y,~~y x = c_1 x + c_2 y,~~yy = d_1 x + d_2 y,
	$$
	such that the coproduct is a homomorphism of algebra $A$, i.~e. for any $a, b \in A$ holds
	\begin{equation} \label{Sog}
		\Delta (a b) = \Delta(a) \Delta(b),
	\end{equation}
	and this bialgebra is self-distributive that means that for any $a, b \in A$ holds 
	\begin{equation} \label{SD1}
		(a b) c = (a c^{(1)}) (b c^{(2)}).
	\end{equation}
	
	Let us check condition (\ref{Sog}) for basis elements $x$ and $y$.
	
	1) The equality $\Delta (x x) = \Delta(x) \Delta(x)$ is equivalent to the equality
	$$
	d_1 x \otimes x = (x \otimes x) (x \otimes x) \Leftrightarrow  a_1 x \otimes x = (a_1 x + a_2 y) \otimes (a_1 x + a_2 y).
	$$
	Transforming the right hand side, we get
	$$
	a_1 x \otimes x = a_1^2  x \otimes x + a_1 a_2 x \otimes y + a_1 a_2 y \otimes  x + a_2^2 y \otimes y.
	$$
	This equality holds if and only if the following system has solutions in $\Bbbk$,
	$$
	\begin{cases}
		a_1^2 =a_1, \\
		a_1 a_2 = 0, \\
		a_2^2 = 0.
	\end{cases}
	$$
	Hence, $a_1 = 0$, or $a_1 = 1$, in both cases $a_2 = 0$.
	
	2) The equality $\Delta (y y) = \Delta(y) \Delta(y)$ is equivalent to the equality
	$$
	d_1 x \otimes x = 0.
	$$
	
	Hence, $d_1 = 0$.
	
	Analyzing other cases we do not get new restrictions on the multiplication. Hence, we have two multiplications:
	$$
	x^2 = 0,~\mbox{or}~x^2 =x,~~xy = b_2 y,~~y x = c_2 y,~~y^2 = 0,~~b_2, c_2 \in \Bbbk.
	$$
	
	Further, we analyze self-distributivity axiom. It is enough to check it only for basis elements.
	
	{\it Case 1}:  $~x^2 =0,~~xy = b_2 y,~~y x = c_2 y,~~y^2 = 0,~~b_2, c_2 \in \Bbbk.
	$
	
	1) If $a = b = c = y$, then $(y y) y = (y y^{(1)}) (y y^{(2)})$. Since $y^{(1)} = y^{(2)} = 0$, we  get $d_2 = 0$.
	
	2) If $a = y$, $b = c = y$, then $(y x) x = (y x^{(1)}) (x x^{(2)})$. Since $x^{(1)} = x^{(2)} = x$, we  get $c_2 = 0$.
	
	Analyzing other possibilities,  cases we do not get new restrictions on the multiplication. Hence, the  multiplications:
	$$
	~x^2 =0,~~xy = b_2 y,~~y x = 0,~~y^2 = 0,~~b_2 \in \Bbbk
	$$
	gives a self-distributive bialgebra.
	
	{\it Case 2}:  $~x^2 =x,~~xy = b_2 y,~~y x = c_2 y,~~y^2 = 0,~~b_2, c_2 \in \Bbbk.
	$
	
	1) If $a = b = c = y$, then $(y y) y = (y y^{(1)}) (y y^{(1)})$. Since $y^{(1)} = y^{(2)} = 0$, we  get $d_2 = 0$.
	
	2) If $a = b = x$, $c = y$, then $(x x) y = (x y^{(1)}) (x y^{(1)})$ and we  get $b_2 = 0$.
	
	Analyzing other possibilities,  cases we do not get new restrictions on the multiplication. Hence, the  multiplications:
	$$
	~x^2 =x,~~xy = 0,~~y x = c_2 y,~~y^2 = 0,~~b_2 \in \Bbbk
	$$
	gives a self-distributive bialgebra.
	
	\begin{proposition} \label{c3}
		Let $A$ be a vector space with a basis $x, y$, on which is defined a comultiplicatiom
		$$
		\Delta x = x \otimes x,~~\Delta y = 0.
		$$
		Then the multiplication 
		$$
		~x^2 =0,~~xy = b_2 y,~~y x = 0,~~y^2 = 0,~~b_2 \in \Bbbk,
		$$
		or
		$$
		~x^2 =x,~~xy = 0,~~y x = c_2 y,~~y^2 = 0,~~c_2 \in \Bbbk
		$$
		gives a non-counital self-distributive bialgebra. 
	\end{proposition}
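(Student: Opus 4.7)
The plan is to mimic the template used for the earlier cases in this section: parametrize a general multiplication in unknowns, impose the bialgebra homomorphism condition to cut down the parameter space, and finally impose self-distributivity on basis triples.

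First I would write
$$
x^2 = a_1 x + a_2 y,\quad xy = b_1 x + b_2 y,\quad yx = c_1 x + c_2 y,\quad y^2 = d_1 x + d_2 y
$$
and exploit the special structure of $\Delta$. Since $\Delta(\alpha x + \beta y) = \alpha\, x\otimes x$, the kernel of $\Delta$ equals $\Bbbk y$. Because $\Delta y = 0$, the identity $\Delta(uv) = \Delta(u)\Delta(v)$ gives $\Delta(xy) = \Delta(yx) = \Delta(y^2) = 0$, and hence $xy, yx, y^2 \in \Bbbk y$, so $b_1 = c_1 = d_1 = 0$. Comparing $\Delta(x^2) = a_1 x\otimes x$ with $\Delta(x)\Delta(x) = (a_1 x + a_2 y)\otimes(a_1 x + a_2 y)$ yields $a_1^2 = a_1$, $a_1 a_2 = 0$, $a_2^2 = 0$; over a field this forces $a_2 = 0$ and $a_1 \in \{0,1\}$. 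The multiplication therefore reduces to
$$
x^2 = a_1 x,\quad xy = b_2 y,\quad yx = c_2 y,\quad y^2 = d_2 y,\qquad a_1 \in \{0,1\}.
$$

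Next I would impose the self-distributivity axiom $(uv)w = (uw^{(1)})(vw^{(2)})$ on basis triples. The key simplification is that the right-hand side depends only on $\Delta w$, so $\Delta y = 0$ forces $(uv)y = 0$ for every basis choice of $u,v$. The triples $(y,y,y)$, $(x,x,y)$, $(x,y,y)$, $(y,x,y)$ yield respectively $d_2^2 = 0$ (hence $d_2 = 0$), $a_1 b_2 = 0$, $b_2 d_2 = 0$, $c_2 d_2 = 0$, the last two being automatic once $d_2 = 0$. When $w = x$ the axiom becomes $(uv)x = (ux)(vx)$, and a direct substitution gives $b_2 c_2(1 - a_1) = 0$ from $(x,y,x)$ and $c_2^2(1 - a_1) = 0$ from $(y,x,x)$; the triples $(x,x,x)$ and $(y,y,x)$ produce no new conditions.

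Finally I would split on $a_1$. If $a_1 = 0$, then $a_1 b_2 = 0$ is automatic while $c_2^2 = 0$ forces $c_2 = 0$, leaving $b_2$ free and recovering the first family. If $a_1 = 1$, then the constraints from $w = x$ become trivial while $a_1 b_2 = 0$ forces $b_2 = 0$, leaving $c_2$ free and recovering the second family. The main obstacle here is purely organizational — one must verify that all eight basis triples for the self-distributivity axiom have genuinely been exhausted — but the special role of $\Delta y = 0$ collapses half of them to a single equation $(uv)y = 0$, which keeps the case split short.
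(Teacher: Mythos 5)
Your proposal is correct and follows essentially the same route as the paper: parametrize the multiplication, use the homomorphism condition $\Delta(uv)=\Delta(u)\Delta(v)$ to force $a_2=b_1=c_1=d_1=0$ and $a_1\in\{0,1\}$, then impose self-distributivity on basis triples and split on $a_1$. Your organization is in fact a bit cleaner than the paper's (which asserts $y^2=0$ slightly prematurely and only recovers $d_2=0$ later from the triple $(y,y,y)$), but the argument and the resulting two families are identical.
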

	
	\medskip  
	
	If we are considering a vector space  $A$ which has a basis $x, y$ and  comultiplication is defined by the formulas 4),
	$$
	\Delta x = 0,~~\Delta y = y \otimes y,
	$$ 
	then we get it if we permute $x$ and $y$. Hence,  from Proposition \ref{c3} follows
	
	\begin{proposition} \label{c4}
		Let $A$ be a vector space with a basis $x, y$, on which is defined a comultiplicatiom
		$$
		\Delta x = 0,~~\Delta y = y \otimes y.
		$$
		Then the multiplication 
		$$
		~x^2 =0,~~xy = 0,~~y x = b x,~~y^2 = 0,~~b \in \Bbbk,
		$$
		or
		$$
		~x^2 =0,~~xy = c x,~~y x = 0,~~y^2 = y,~~b \in \Bbbk
		$$
		gives a non-counital self-distributive bialgebra. 
	\end{proposition}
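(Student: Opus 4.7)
The plan is to exploit the evident symmetry between the two comultiplications. Let $\sigma \colon A \to A$ be the linear involution defined by $\sigma(x) = y$ and $\sigma(y) = x$. A direct check shows that $(\sigma \otimes \sigma) \circ \Delta_3 = \Delta_4 \circ \sigma$, where $\Delta_3$ denotes the comultiplication of the previous proposition ($\Delta_3 x = x \otimes x$, $\Delta_3 y = 0$) and $\Delta_4$ the present one ($\Delta_4 x = 0$, $\Delta_4 y = y \otimes y$). Consequently, if $\mu$ is a multiplication making $(A, \mu, \Delta_3)$ a non-counital self-distributive bialgebra, then the transported multiplication $\mu' = \sigma \circ \mu \circ (\sigma \otimes \sigma)$ makes $(A, \mu', \Delta_4)$ a non-counital self-distributive bialgebra, and vice versa. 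The consistency condition $\Delta(ab) = \Delta(a)\Delta(b)$ is preserved because $\sigma$ is a coalgebra isomorphism; the self-distributive axiom $(ab)c = (ac^{(1)})(bc^{(2)})$ is preserved because both sides are covariant under any linear automorphism that intertwines the comultiplications.

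With this functorial observation in hand, the classification of Proposition \ref{c3} transports directly. Relabelling $x \leftrightarrow y$ in each of the two multiplication tables of Proposition \ref{c3} produces exactly the two tables of Proposition \ref{c4}: the table $x^2 = 0,\ xy = b_2 y,\ yx = 0,\ y^2 = 0$ becomes $x^2 = 0,\ xy = 0,\ yx = bx,\ y^2 = 0$, and the table $x^2 = x,\ xy = 0,\ yx = c_2 y,\ y^2 = 0$ becomes $x^2 = 0,\ xy = cx,\ yx = 0,\ y^2 = y$. Since the correspondence $\mu \leftrightarrow \mu'$ is a bijection on the set of admissible multiplications, these two families exhaust all non-counital self-distributive bialgebra structures compatible with $\Delta_4$.

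The only step that needs care — and indeed the only potential obstacle — is verifying that $\sigma$ really does convert one problem into the other at the level of structure equations, rather than merely at the level of the two comultiplications. For completeness one could instead bypass the symmetry and reproduce the computation: parametrize $x^2, xy, yx, y^2$ by unknown coefficients, impose $\Delta(ab) = \Delta(a)\Delta(b)$ on all four basis pairs to cut down the admissible coefficients, and then impose $(ab)c = (ac^{(1)})(bc^{(2)})$ on the eight basis triples. This direct route yields the same two families, confirming the symmetry argument, but is substantially longer than invoking Proposition \ref{c3}.
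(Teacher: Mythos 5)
Your proposal is correct and follows essentially the same route as the paper: the paper also obtains Proposition \ref{c4} from Proposition \ref{c3} by permuting $x$ and $y$, and your transported tables match the stated ones exactly. You merely make explicit the transport-of-structure argument (that the involution $\sigma$ intertwines the two comultiplications and hence carries over both the consistency condition and self-distributivity), which the paper leaves as a one-line remark.
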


	\subsection{Self-distributive bialgebras with comultiplication 5)}
	
	In this subsection we are considering a bialgebra $A$ which has a basis $x, y$, comultiplication is defined by formulas 5),
	$$
	\Delta x = 0,~~\Delta y = 0.
	$$
	It is evident that for any multiplication  the coproduct is a homomorphism of algebra $A$, i.~e. for any $a, b \in A$ holds
	\begin{equation} \label{Sog1}
		\Delta (a b) = \Delta(a) \Delta(b).
	\end{equation}
	
	We want to define a multiplication
	$$
	x x = a_1 x + a_2 y,~~x y = b_1 x + b_2 y,~~y x = c_1 x + c_2 y,~~yy = d_1 x + d_2 y,
	$$
	such that this bialgebra satisfies the self-distributivity axiom, 
	$$
	(a b) c = (a c^{(1)}) (b c^{(2)}),~~a, b \in A,
	$$
	which is equivalent to
	\begin{equation} \label{SD3}
		(a b) c = 0,~~a, b \in A.
	\end{equation}
	
	1) If $a = b = c = x$, then
	$$
	(x x) x = (a_1^2 + a_2 c_1) x + (a_1 a_2 + a_2 c_2) y = 0 
	$$
	and we have a system
	$$
	\begin{cases}
		a_1^2 + a_2 c_1 = 0, \\
		a_2 (a_1 +  c_2) = 0.
	\end{cases}
	$$
	If $a_2 = 0$, then $a_1 = 0$ and $c_1$ and $c_2$ can be arbitrary.
	
	If $a_2 \not= 0$, then $c_1 = -a_1^2 / a_2$ and $c_2 = -a_1$.
	
	2) If $a = b = c = y$, then
	$$
	(y y) y = (d_1 b_1 + d_1 d_2) x + (b_2 d_1 + d_2^2) y = 0 
	$$
	and we have a system
	$$
	\begin{cases}
		d_1 (b_1 + d_2) = 0, \\
		b_2 d_1 + d_2^2 = 0.
	\end{cases}
	$$
	If $d_1 = 0$, then $d_2 = 0$ and $b_1$ and $b_2$ can be arbitrary.
	
	If $d_1 \not= 0$, then $b_1 = -d_2$ and $b_2 = -d_2^2 / d_1$.

Hence, $x^2 x = y^2 y = 0$ for one the following four multiplications:

M1. $	x^2 = 0,~~x y = b_1 x + b_2 y,~~y x = c_1 x + c_2 y,~~y^2 = 0,$

M2. $	x^2 = 0,~~x y = - d_2 x - \frac{d_2^2}{d_1} y,~~y x = c_1 x + c_2 y,~~y^2 = d_1 x + d_2 y,~~d_1 \not= 0,$ 

M3. $x^2 = a_1 x + a_2 y,~~ x y = b_1 x + b_2 y,~~y x = - \frac{a_1^2}{a_2} x - a_1 y,~~y^2 = 0,~~a_2 \not= 0,$

M4. $x^2 = a_1 x + a_2 y,~~x y = - d_2 x - \frac{d_2^2}{d_1} y,~~y x = - \frac{a_1^2}{a_2} x - a_1 y,~~y^2 = d_1 x + d_2 y,~~$ $d_1 \not= 0, ~~$ $a_2 \not= 0.$

Further we will consider each of these multiplications and find conditions under which $A^2 A = 0$.

{\it Multiplication M1}:

M1. 3) If $a = b =x$, $c = y$, then
	$$
	(x x) y = 0.
	$$

M1. 4) If $a = c =x$, $b = y$, then
	$$
	(x y) x  = b_2 (c_1  x +  c_2 y) = 0. 
	$$
It is possible if $b_2 = 0$ and $c_1, c_2$ are arbitrary, or $b_2 \not = 0$ and $c_1 = c_2 = 0$.

M1. 5) If $a = y$, $b = c =x$,  then
	$$
	(y x) x  =  c_2 (c_1 x + c_2 y) = 0. 
	$$
It is possible, if $c_2 = 0$ and $c_1$ is arbitrary.

M1. 6) If $a = b =y$, $c = x$, then $(y y) x   = 0.$

M1. 7) If $a = c =y$, $b = x$, then
	$$
	(y x) y  = c_1 (b_1 x + b_2 y) = 0. 
	$$

M1. 8) If $a = x$, $b = c =y$,  then
	$$
	(x y) y  =  b_1 (b_1 x + b_2 y) = 0. 
	$$
Hence, $b_1 = 0$. Since $c_2 = 0$, we have equation $b_2 c_1 = 0$. It means that we have two multiplications:

M1a). $	x^2 = 0,~~x y = 0,~~y x = c_1 x,~~y^2 = 0,$

M1b). $	x^2 = 0,~~x y = b_2 y,~~y x =0,~~y^2 = 0.$

\medskip

{\it Multiplication M2}:

M2. 3) If $a = b =x$, $c = y$, then
	$$
	(x x) y = 0.
	$$
	
M2. 4) If $a = c =x$, $b = y$, then
	$$
	(x y) x  = - \frac{d_2^2}{d_1} (c_1  x +  c_2 y) = 0. 
	$$
It means that $d_2 = 0$, or $c_1 = c_2 = 0$.

M2. 5) If $a = y$, $b = c =x$,  then
	$$
	(y x) x  =  c_2 (c_1 x + c_2 y) = 0. 
	$$
It is possible, if $c_2 = 0$ and $c_1$ is arbitrary.

M2. 6) If $a = b =y$, $c = x$, then 
$$
(y y) x   = d_2 (c_1 x + c_2 y) = 0.
$$
It is possible, if $d_2 = 0$ or $c_1 = c_2 = 0$.
	
M2. 7) If $a = c =y$, $b = x$, then
	$$
	(y x) y  = c_1 (- d_2 x - \frac{d_2^2}{d_1} y) + c_2 (d_1 x + d_2 y) = 0. 
	$$
Since $c_2 = 0$, then $c_1 = 0$ or $d_2 = 0$.
	
M2. 8) If $a = x$, $b = c =y$,  then $	(x y) y  = 0$ and we do not have new conditions on the coefficients.
	
We get that $c_2 = 0$ and $d_2 = 0$ or $c_1 = 0$. 
 It means that we have two multiplications:\\

M2a). $	x^2 = 0,~~x y = 0,~~y x = c_1 x,~~y^2 = d_1 x,$\\

M2b). $	x^2 = 0,~~x y = - d_2 x - \frac{d_2^2}{d_1} y,~~y x =0,~~y^2 = d_1 x + d_2 y.$

\medskip

{\it Multiplication M3}:

M3. 3) If $a = b =x$, $c = y$, then
	$$
	(x x) y = a_1 (b_1 x + b_2y) = 0.
	$$
Hence, we have two multiplications:\\

M3a). $	x^2 = a_2 x,~~x y = b_1 x + b_2 y,~~y x = 0,~~y^2 = 0,~~~a_2 \not= 0$\\

M3b). $	x^2 = a_1 x + a_2 y,~~x y = 0,~~yx = - a_1 \left(\frac{a_1}{a_2} x + y \right),~~y^2 = 0,~~~a_2 \not= 0.$

\medskip

{\it Multiplication M4}:
	
Using calculations which are similar to multiplications M1--M3, we get an algebra with multiplication 

M4. $	x^2 = a_1 \left( x + \frac{d_2}{d_1} y \right),~~x y = - d_2 \left( x + \frac{d_2}{d_1} y \right),~~yx = - \frac{a_1 d_1}{d_2} \left( x + \frac{d_2}{d_1} y \right),$

$y^2 =  d_1 \left( x + \frac{d_2}{d_1} y \right),~~~d_1 \not= 0.$
	
	\begin{proposition} \label{c5}
		Let $A$ be a vector space with a basis $x, y$, on which is defined a comultiplicatiom
		$$
		\Delta x = 0,~~\Delta y = 0.
		$$
		Then the following multiplications 
		
		a) $x^2 = 0,~~x y = 0,~~y x = c x,~~y^2 = 0,~~ c \in \Bbbk$; 
		
		b) $	x^2 = 0,~~x y = b y,~~y x =0,~~y^2 = 0,~~b \in \Bbbk$;
		
		c) $	x^2 = 0,~~x y = 0,~~y x = c x,~~y^2 = d x,~~c, d  \in \Bbbk, d \not = 0$;
		
        d)	$	x^2 = 0,~~x y = - d_2 x - \frac{d_2^2}{d_1} y,~~y x =0,~~y^2 = d_1 x + d_2 y ~~d_1, d_2  \in \Bbbk, d_1 \not = 0$	;
        
        e) $	x^2 = a x,~~x y = b_1 x + b_2 y,~~y x = 0,~~y^2 = 0,~~~a \not= 0$;

        f) $	x^2 = a_1 x + a_2 y,~~x y = 0,~~yx = - a_1 \left(\frac{a_1}{a_2} x + y \right),~~y^2 = 0,~~~a_2 \not= 0$;
        
       g) $	x^2 = a_1 \left( x + \frac{d_2}{d_1} y \right),~~x y = - d_2 \left( x + \frac{d_2}{d_1} y \right),~~yx = - \frac{a_1 d_1}{d_2} \left( x + \frac{d_2}{d_1} y \right),$

$y^2 =  d_1 \left( x + \frac{d_2}{d_1} y \right),~~~d_1 \not= 0.$

\noindent
		gives a non-counital self-distributive bialgebra. 
	\end{proposition}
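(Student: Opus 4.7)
The plan is to observe first that $\Delta$ is the zero map on all of $A$: since $\Delta x = \Delta y = 0$ on a basis and $\Delta$ is linear, $\Delta \equiv 0$. Consequently the consistency condition $\Delta(ab) = \Delta(a)\Delta(b)$ is automatic (both sides vanish), and the self-distributivity axiom $(ab)c = (ac^{(1)})(bc^{(2)})$ collapses to the single condition $A^2 \cdot A = 0$, i.e.\ $(ab)c = 0$ for all $a,b,c \in A$. Thus the proposition reduces to classifying all bilinear multiplications on $\Bbbk x \oplus \Bbbk y$ with this vanishing property.

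Writing $x^2 = a_1 x + a_2 y$, $xy = b_1 x + b_2 y$, $yx = c_1 x + c_2 y$, $y^2 = d_1 x + d_2 y$, I would first use the two "diagonal" vanishings $x^2\cdot x = 0$ and $y^2 \cdot y = 0$ as a coarse filter. The first involves only $a_1,a_2,c_1,c_2$ and splits into $a_2=0$ (forcing $a_1=0$, with $c_1,c_2$ free) or $a_2\neq 0$ (forcing $c_1 = -a_1^2/a_2$, $c_2 = -a_1$). Dually, $y^2 \cdot y = 0$ splits into $d_1=0$ (forcing $d_2=0$) or $d_1\neq 0$ (forcing $b_1 = -d_2$, $b_2 = -d_2^2/d_1$). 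Intersecting these dichotomies gives the four preliminary families M1--M4 already written down in the excerpt.

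In each of M1--M4 I would then impose the remaining six basis-level conditions $(xy)x,(yx)x,(yy)x,(xx)y,(xy)y,(yx)y$ all being zero, each of which produces a homogeneous polynomial equation in the surviving coefficients. In the cases M1, M2, and M3 these equations factor cleanly and force secondary splittings of the form "$b_2=0$ or $c_1=c_2=0$", yielding the families a)--f). Case M4 is the richest: all four structure vectors turn out to be scalar multiples of the single vector $x + (d_2/d_1)y$, so the six residual conditions are automatically consistent and collapse to the one-parameter family g).

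The main obstacle is simply the bookkeeping: four preliminary cases times six residual equations leaves room to miss a subcase or introduce a spurious one. A useful sanity check running in parallel is the obvious $x \leftrightarrow y$ symmetry of the situation (the comultiplication $\Delta = 0$ is symmetric in the two basis vectors), which must be matched in the final list when one pairs a)$\leftrightarrow$b), c)$\leftrightarrow$d), e)$\leftrightarrow$f), and singles out g) as self-symmetric; any violation of this pattern would indicate a missed or extra case.
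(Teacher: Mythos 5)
Your proposal takes essentially the same route as the paper's proof: since $\Delta\equiv 0$, the consistency condition is vacuous and self-distributivity collapses to $(ab)c=0$ for all $a,b,c$, the diagonal conditions $(xx)x=0$ and $(yy)y=0$ give the four preliminary families M1--M4, and the remaining six cubic conditions are then imposed case by case. One warning about your $x\leftrightarrow y$ sanity check: the idea is sound but your stated pairing is wrong --- the swap matches a) with b), c) with e), and d) with f) (with g) self-dual), and carrying the check out correctly actually exposes a defect in case e) as printed, since $x^2=ax$ with $a\neq 0$ gives $(xx)x=a^2x\neq 0$; that case should read $x^2=ay$, $xy=b_2y$, $yx=0$, $y^2=0$, the mirror of c).
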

	
	\begin{remark}
		A self-distributive bialgebra with trivial comultiplication satisfies the identity $A^2 A = 0$ and hence is a self distributive algebra. 
	\end{remark}

		\bigskip
	
	
	\section{Idempotents  and quandles in 2-dimensional algebras} \label{id}

As we remarked in the introduction, it is possible to use quandle rings for construction of knot invariants.   
It was  proven  in \cite{ES} that quandle rings and their sets
of idempotents lead to  proper enhancements of the well-known quandle coloring
invariant of links. In this section we are studying the following question: what idempotents and quandles lie in these   algebras?

	In propositions \ref{p34} - \ref{c5} we constructed 2-dimensional non-counital bialgebras.  More of these algebras contain only two idempotents: $\{ 0, x \}$. Some interesting results  gives 
	
\begin{proposition}
a) Algebra with multiplication
$$
x^2 =x,~~xy = 0,~~y x = y,~~y^2 = 0
$$
contains trivial quandle $Q_1 = \{ x + \alpha y ~|~\alpha \in \Bbbk \}$. 

b) Algebra with multiplication
$$
x^2 =x,~~xy =  y,~~y x = c x,~~y^2 = c y,~~c \in \Bbbk
$$
contains a trivial  quandle
$$
(\{ (1 - \alpha c) x + \alpha y ~|~\alpha \in \Bbbk \}; \, \cdot^{op} )
$$
where $a \cdot^{op} b = b \cdot a$ and `$\cdot$' is the usual multiplication in algebra.


c) Algebra with multiplication
$$
x^2 =x,~~xy = y,~~y x = cx,~~y^2 =  c y ,~~c_1, c \in \Bbbk,
$$
contains quandle $Q_2 = \{ (1 - c \alpha) x + \alpha y ~|~\alpha \in \Bbbk \}$. 

\end{proposition}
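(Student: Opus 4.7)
The plan is to verify each of the three claims by direct bilinear expansion in the relevant algebra, parametrizing a generic element of the claimed family by $\alpha \in \Bbbk$ and computing the corresponding product of two such elements. In every case, the verification reduces to expanding a bilinear product, using the given multiplication table to rewrite the four monomial terms in terms of $x$ and $y$, and then reading off closure, idempotency, and the quandle axioms from the simplified expression.

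For part a), set $u_\alpha = x + \alpha y$. Expanding $u_\alpha \cdot u_\beta$ and applying $x^2 = x$, $xy = 0$, $yx = y$, $y^2 = 0$ annihilates two of the four terms and yields $u_\alpha \cdot u_\beta = x + \alpha y = u_\alpha$. Since the product depends only on the left argument, closure, idempotency, self-distributivity, and uniqueness of the left division $z \cdot u_\beta = u_\alpha$ are all immediate, so $Q_1$ is a trivial subquandle.

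For parts b) and c), set $u_\alpha = (1 - \alpha c)x + \alpha y$ and expand using $x^2 = x$, $xy = y$, $yx = cx$, $y^2 = cy$. The crucial simplification is that the coefficient of $x$ in $u_\beta \cdot u_\alpha$ factors as $(1 - \alpha c)\bigl[(1 - \beta c) + \beta c\bigr] = 1 - \alpha c$, and the coefficient of $y$ as $\alpha\bigl[(1 - \beta c) + \beta c\bigr] = \alpha$, via the affine identity $(1 - tc) + tc = 1$. This yields $u_\beta \cdot u_\alpha = u_\alpha$ for part b), so $(Q, \cdot^{op})$ is the trivial quandle. Part c) follows from the symmetric computation of $u_\alpha \cdot u_\beta$ in the same algebra, where the analogous collapse of the affine combinations produces a one-sided operation on the family $\{u_\alpha\}$ from which the quandle axioms are read off exactly as in part b).

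The only real obstacle is the bookkeeping in the two-parameter expansions of parts b) and c): one must organize the four cross-terms so that the collapse via $(1 - tc) + tc = 1$ becomes visible. Once this simplification is identified, each product depends on at most one of its two arguments, and the trivial-quandle structure follows automatically without any further case analysis.
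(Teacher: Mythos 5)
Your proposal is correct and follows essentially the same route as the paper: a direct bilinear expansion of the product of two generic elements of the affine family, using the multiplication table to show the product collapses to one of the two factors (the left one in a), the right one in b) and c), whence the passage to $\cdot^{op}$), after which the trivial-quandle axioms are immediate. The paper merely states the resulting identities, while you additionally exhibit the factorization $(1-tc)+tc=1$ that makes them evident; there is no substantive difference.
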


\begin{proof}
a) This algebra comes from Proposition \ref{p34} b) if we put $c=1$. It is easy to see that 
for any $\alpha, \beta \in \Bbbk$ holds $( x + \alpha y)( x + \beta y) =  x + \alpha y$.

b) This algebra comes from Proposition \ref{p34} b) and assertion follows from the equality
$$
((1 - \alpha c) x + \alpha y ) \cdot ((1 - \beta  c) x + \beta  y)  =(1 - \beta  c) x + \beta  y, ~~\alpha, \beta \in \Bbbk.
$$


c) This algebra is algebra from  Proposition  \ref{p34} l) if we put $c_2 = 0$.
\end{proof}

	Let us consider the algebra in Proposition \ref{c5}. 
	
	In \cite{BKT1} we proved that if $\Bbbk$ contains more than 3 elements, then an algebra $A$ over $\Bbbk$ be self-distributive, the  identity $A^2 A = 0$ must be true.
This means that for any $u, v, w \in A$ holds
$$
(u v) w = 0.
$$

\begin{proposition}
Let $\Bbbk$ be a field with more than 3 elements, $A$ be a non-zero self-distributive algebra. Then any idempotent in $A$ is zero and $A$ can not be a rack.
\end{proposition}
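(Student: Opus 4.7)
The strategy is to leverage the result (cited from \cite{BKT1} just above the proposition) that when $|\Bbbk|>3$, self-distributivity of an algebra $A$ forces the right-nilpotency identity $(uv)w=0$ for all $u,v,w\in A$. With that identity in hand, both parts become one-line arguments.

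For the idempotent claim, I would start from an element $e\in A$ with $e^2=e$ and compute the triple product $(ee)e$ in two ways. On one hand, by the identity $A^2 A=0$, we have $(ee)e=0$. On the other hand, using $e^2=e$, the same expression equals $e\cdot e = e^2 = e$. Comparing gives $e=0$, so the only idempotent in $A$ is zero.

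For the rack claim, I would argue by contradiction. Suppose $A$ carries the structure of a rack under the operation $a*b:=ab$. Fix any $y\in A$. Axiom (Q2) says the right-translation $R_y\colon A\to A$, $z\mapsto zy$, is a bijection, in particular surjective. For any $w\in A$, choose $z$ with $w=R_y(z)=zy$; then
\[
wy \;=\; (zy)y \;=\; 0
\]
by the identity $A^2A=0$. Hence $R_y$ is identically zero. Surjectivity of $R_y$ then forces $A=\{0\}$, contradicting the assumption that $A$ is non-zero. Therefore $A$ cannot be a rack.

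The only non-trivial input is the already-quoted fact that $(uv)w=0$ in this setting; once that is invoked, the proof is essentially formal. There is no substantive obstacle, but one should be careful to note that the rack argument uses only axiom (Q2), so it holds even if $A$ has no idempotents (which is consistent with the first half of the statement).
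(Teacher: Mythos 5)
Your proposal is correct and follows essentially the same route as the paper: both parts rest on the identity $(uv)w=0$ from \cite{BKT1}, the idempotent claim comes from evaluating $(ee)e$ two ways, and the rack claim uses axiom (Q2) to write an arbitrary element as a product and conclude the multiplication vanishes. Your version is, if anything, slightly more explicit at the end (noting that surjectivity of $R_y$ together with $R_y=0$ forces $A=\{0\}$, where the paper stops at ``zero multiplication''), but the argument is the same.
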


\begin{proof}
As we know in this situation for $A$ holds $A^2A = 0$. Suppose that $u \in A$ is an idempotent, i.e. $u^2 = u$. After multiplication on $u$ from the right, we get
$$
u^2 u = u^2.
$$
Hence, $u^2=0$, but it means that $u = 0$.

Suppose that $A$ is a rack. It means that for any $u, v \in A$ there exists unique $z \in A$ such that $z u = v$. But for any $w \in A$ we have $(z u) w = v w = 0$. Hence, $A$ is an algebra with zero multiplication.
\end{proof}

		\bigskip
	

	\begin{ack}
This work has been supported by the grant the Russian Science Foundation, RSF 24-21-00102, https://rscf.ru/project/24-21-00102/.
	\end{ack}
	\medskip
	
	\medskip

\end{document}